\def\BibTeX{{\rm B\kern-.05em{\sc i\kern-.025em b}\kern-.08em
    T\kern-.1667em\lower.7ex\hbox{E}\kern-.125emX}}
\newtheorem{thm}{Theorem }%
\newtheorem{proposition}{Proposition}%
\newtheorem{lemma}{Lemma}%
\newtheorem{defn}{Definition}%
\newtheorem{rem}{Remark }%
\newtheorem{assumption}{Assumption}%
\newtheorem{example}{Example}[section]
\newcommand{\set}[1]{\left\{#1\right\}}
\newcommand{\norm}[1]{\left\Vert #1 \right\Vert}
\newcommand{\abs}[1]{\left\vert #1 \right\vert}
\newcommand{\ra}{\rightarrow}
\newcommand{\Real}{\mathbb{R}}
\newcommand{\eps}{\varepsilon}
\renewcommand{\subset}{\subseteq}
\newcommand{\K}{\mathcal{K}}
\newcommand{\C}{\mathcal{C}}
\newcommand{\D}{\mathcal{D}}
\title{\LARGE \bf
A Converse Control Lyapunov Theorem for Joint Safety and Stability
}
\author{Thanin Quartz, Maxwell Fitzsimmons, and Jun Liu  %
\thanks{This research was supported in part by an NSERC Discover Grant and the Canada Research Chairs program.}%
\thanks{Thanin Quartz, Maxwell Fitzsimmons, and Jun Liu are with the Department of Applied Mathematics, University of Waterloo, Waterloo, Ontario N2L 3G1, Canada.  Email: \texttt{j.liu@uwaterloo.ca (Jun Liu)}
        }%
}
\begin{document}

\maketitle
\thispagestyle{empty}
\pagestyle{empty}

\begin{abstract}
We show that the existence of a strictly compatible pair of control Lyapunov and control barrier functions is equivalent to the existence of a single continuously differentiable Lyapunov function that certifies both asymptotic stability and safety. This characterization complements existing literature on converse Lyapunov functions by establishing a partial differential equation (PDE) characterization with prescribed boundary conditions on the safe set, ensuring that the safe set is exactly certified by this Lyapunov function. The result also implies that if a safety and stability specification cannot be certified by a single Lyapunov function, then any pair of control Lyapunov and control barrier functions \textit{necessarily} leads to a conflict and cannot be satisfied simultaneously in a robust sense. 
\end{abstract}
\begin{keywords}
Safety; Stability; Control Lyapunov function; Control barrier function; Converse Lyapunov theorem.  
\end{keywords}

\section{Introduction}

As control applications become more complex, design challenges have gone beyond stability guarantees to include formal safety guarantees. When designing a feedback controller, it is essential both to stabilize the system and to enforce state constraints. This is true for applications in safety‐critical domains such as autonomous vehicles, chemical plants, and robotic systems that are only deployed after rigorous demonstrations of safety \cite{koopman2016challenges,crowl1990chemical}. 

Control Lyapunov functions (CLFs) are a well‐studied tool for designing stabilizing controllers. In particular, the necessary and sufficient conditions for the existence of a CLF were established in \cite{artstein1983stabilization}, and these results were subsequently used to design a universal control law for affine nonlinear systems \cite{sontag1989universal}. Similar to the CLF approach, control barrier functions (CBFs) \cite{wieland2007constructive,ames2016control} enforce safety by requiring the satisfaction of inequalities through the Lie derivative of a barrier function. However, while CBFs ensure safety, they do not, on their own, guarantee system stability. Consequently, safe stabilization problems, and related reach-avoid formulations, that integrate both safety and stability (or reachability) have received growing attention; see, e.g.,  \cite{romdlony2016stabilization,meng2022smooth,mestres2025conversetheoremscertificatessafety,li2024stabilization,dawson2022safe,li2023graphical,ong2019universal,mestres2022optimization,dai2024verification} for a partial list of results.

One common approach is to combine CBFs and CLFs in an optimization-based framework. When the system is control-affine, optimization can be achieved via quadratic programming \cite{ames2016control,ames2019control}, which is efficiently solvable and amenable to online implementation, such as in model predictive control (MPC) \cite{wu2019control}. However, this introduces new challenges, as the design must ensure compatibility between the two functions for the optimization problem to be feasible. When compatibility fails, the stability requirement is often relaxed and treated as a soft constraint \cite{ames2016control,li2023graphical,mestres2022optimization}. 

An alternative approach, taken in \cite{romdlony2016stabilization}, is to unify a CLF and a CBF into a single control Lyapunov–barrier function (CLBF). Once a CLBF has been obtained, Sontag’s universal formula can be applied to design a safe stabilizing controller. While practically appealing, a drawback of the approach in \cite{romdlony2016stabilization}, as later pointed out in \cite{braun2017existence,braun2020comment} (see also discussions in  \cite{meng2022smooth,mestres2025conversetheoremscertificatessafety}), is that the CLBF conditions stated in \cite{romdlony2016stabilization} cannot hold without stringent requirements on the safe set.

Recent work has sought to address these challenges by investigating converse Lyapunov theorems for joint safety and stability. In \cite{meng2022smooth}, the authors present a unified theoretical framework for Lyapunov and barrier functions via converse theorems that guarantee stability with safety, as well as reach-avoid-stay specifications. In a follow-up study, \cite{mestres2025conversetheoremscertificatessafety} provided a broader treatment of converse results by deriving necessary conditions for the existence of CLBFs and compatible CBF-CLF pairs. 

In this paper, we characterize CLBFs via strictly compatible CBF-CLF pairs. Specifically, 
complementary to the results in \cite{meng2022smooth,mestres2025conversetheoremscertificatessafety}, we provide a succinct proof that a strictly compatible pair of control Lyapunov and control barrier functions exists if and only if there is a single continuously differentiable Lyapunov function that simultaneously certifies asymptotic stability and safety. To the best of the authors’ knowledge, this is the first result showing that the existence of a compatible CLF–CBF pair implies the existence of a single CLBF, and this is achieved under a weaker notion of compatibility (see Remarks \ref{rem:boundary} and \ref{rem:controller}) Our results extend the converse Lyapunov literature by providing a PDE characterization with prescribed boundary conditions on the safe set, ensuring exact certification of safety. Furthermore, our result implies that if a safety and stability specification cannot be certified by a single continuously differentiable Lyapunov function, then any pair of control Lyapunov and control barrier functions \textit{necessarily} leads to a conflict and cannot be satisfied simultaneously in a robust sense.

\section{Preliminaries}

Consider a control-affine system of the form
\begin{equation}
    \label{eq:sys}
    \dot x = 
    f(x) + g(x)u,
\end{equation}
where $f:\,\Real^n\ra\Real^n$ and $g:\,\Real^n\ra\Real^{n\times m}$. We seek to design a feedback controller of the form $u=u(x)$ such that solutions of the closed-loop system
\begin{equation}
    \label{eq:clsys}
    \dot x = F(x) := f(x) + g(x)u(x)
\end{equation}
have desired properties. We assume that $f(0)=0$ and $u(0)=0$, i.e., the origin is an equilibrium point of (\ref{eq:clsys}). We assume that both $f$ and $g$ are continuously differentiable (except possible at the origin). The controller we aim to construct shall have the same property. Under these assumptions, we denote the unique solution to~(\ref{eq:clsys}) from the initial condition $x(0) = x \neq 0$ by $\phi(t, x)$. 

A set $\mathcal{C} \subset \mathbb{R}^n$ is said to be forward invariant for (\ref{eq:clsys}), if for all $x\in \mathcal{C}$, the solution $\phi(t,x) \in \mathcal{C}$ for all $t \geq 0$.
CBF methods in safe control rely on set invariance. A safe set $\mathcal{C}$ 
is completely characterized by the zero-superlevel set of a continuously differentiable function $h: \mathbb{R}^n \rightarrow \mathbb{R}$ as
\begin{align} 
\mathcal{C} & = \left\{ x \in \mathbb{R}^n \mid h(x) \geq 0 \right\}, \label{safe-set}\\
\partial \mathcal{C} & = \left\{ x \in \mathbb{R}^n \mid h(x) = 0 \right\}.
\end{align}

\begin{defn}[Extended class $\K$ function] We say that a continuous function $\alpha:[-b, a) \ra\Real$ $a,\,b>0$, belongs to extended class $\mathcal{K}$, if it is strictly increasing and $\alpha(0)=0$.
\end{defn}

CBFs \cite{xu2015robustness,ames2019control,xu2018constrained,ames2016control} are a constructive approach to verifying controlled forward invariance of a safe set.

\begin{defn}[Control barrier function]\label{defn:CBF}
    Given the set $\mathcal{C}$ as defined in (\ref{safe-set}), the continuously differentiable function $h$ is called a control barrier function for (\ref{eq:sys}) on a domain $\mathcal{D}$ with $\mathcal{C} \subset \mathcal{D} \subset \mathbb{R}^n$, if there exists an extended class $\K$ function $\alpha$ such that, for all $x\in \D$, 
\begin{equation} \label{eq:CBF-condition}
    \sup _{u \in \mathbb{R}^{m}}\left[L_f h(x)+L_g h(x) u\right]+\alpha(h(x)) \geq 0
\end{equation}
where $L_f h(x) = \nabla h(x)^{\top} f(x)$ and $L_g h(x) = \nabla h(x)^{\top} g(x)$.
\end{defn}

\begin{rem}\label{rem:boundary}
By convention \cite{ames2016control}, inequality~\eqref{eq:CBF-condition} is stated for all $x \in \D$. Theoretically, however, forward invariance of $\mathcal{C}$ follows from the same condition stated only on $\partial \mathcal{C}$, so it suffices to enforce it on the boundary. Moreover, for $x \in \partial \mathcal{C}$, we have $\alpha(h(x)) \equiv 0$. We use this simplified version later in our assumption (cf.~\eqref{eq:CBF} in Assumption~\ref{Assmp:SC}).
\end{rem}

It is well known that the existence of a CLF characterizes asymptotic stabilizability of system~(\ref{eq:sys})~\cite{artstein1983stabilization,sontag1983lyapunov,sontag1989universal}. In particular, a continuously differentiable (i.e., $C^1$) and radially unbounded CLF on $\Real^n$ implies global asymptotic stabilizability by a feedback law continuous on $\Real^n\setminus\set{0}$~\cite{artstein1983stabilization,sontag1983lyapunov}.

\begin{defn}[Control Lyapunov function]\label{defn:CLF}
    Given an open set $\D \subseteq \mathbb{R}^n$ containing the origin, a continuously differentiable function $V:\,\D \rightarrow \mathbb{R}$ is called a control Lyapunov function on $\D$ for the system (\ref{eq:sys}), if $V$ is positive definite on $\D$
    and, for each $x \in \D \backslash\{0\}$, 
\begin{equation}
    \inf_{u\in\Real^m} \left[L_f V(x) + L_g V(x) u \right] < 0.
\end{equation}
\end{defn}

There has been considerable effort in the literature to unify CLF and CBF approaches. At the intersection of stability and safety verification lies the concept of a control Lyapunov-barrier function (CLBF) \cite{romdlony2016stabilization}. In this paper, we provide a definition of a CLBF that is essentially a Lyapunov function (see, e.g., \cite{meng2022smooth,dawson2022safe}) with a prescribed boundary condition, exactly representing the safe set as its sub-level set. 

\begin{defn}[Control Lyapunov-barrier Function] \label{defn:CLBF}
Let $\D \subseteq \mathbb{R}^n$ be an open set containing the origin. A continuously differentiable function $W:\,\D\ra\Real$ is called a CLBF w.r.t. $\mathcal{C}$, if $\C\subset\D$ and $W$ is positive definite on $\D$ and satisfies the following conditions:
    \begin{enumerate}
        \item $\inf _{u \in \mathbb{R}^m} \nabla W(x)^{\top} \left[f(x)+g(x)u\right]<0,\, \forall x \in \mathcal{D}\setminus\set{0}$; 
        \item $\{ x\in \D : W(x) \le 1 \} = \mathcal{C}$; 
        \item $\{x\in \D : W(x) = 1 \} = \partial \mathcal{C}$.
    \end{enumerate}
\end{defn}

\begin{rem}
While the value $1$ is chosen for convenience, enforcing $W$ to be constant on $\partial\C$ ensures that $\C$ is exactly a sublevel set and $\partial\C$ a level set of $W$, guaranteeing forward invariance, and hence safety, of $\C$ under the resulting controller. Note that Definition~\ref{defn:CLBF} implies Definition~\ref{defn:CLF}, and the boundary of $\C$ is robustly controlled invariant since $W(x)$ is strictly decreasing along level sets under a suitable control input.  Therefore, Sontag's universal formula~\cite{sontag1989universal}, i.e., 
$u(x)=0$ when $L_g W(x)=0$ and 
$u(x)= -\dfrac{L_f W(x) + \sqrt{(L_f W(x))^2 + \|L_g W(x)\|^4}}{\|L_g W(x)\|^2}\, L_g W(x)$ otherwise, 
applies directly to $W(x)$ to obtain a safe, stabilizing controller. This controller is continuous on $\mathcal{C}\setminus\{0\}$, including points where $L_g W=0$, by an implicit function theorem argument and, if the small control property holds additionally, it is also continuous at $x=0$ \cite[Theorem 1]{sontag1989universal}.
\end{rem}

Definition \ref{defn:CLBF} provides a conceptually simpler characterization of joint safety and stability via a single control Lyapunov function. In the next section, we show, using a constructive approach, that this simplification is without loss of generality under the assumption of strictly compatible CBF-CLF pairs.

\section{Main results}

In this section, we establish the existence of a single $C^1$ Lyapunov function that certifies both asymptotic stability and safety, given the existence of a strictly compatible pair of a CLF and a CBF. 
The main result follows in two steps: Proposition \ref{prop:controller} extracts a safe stabilizing controller from a compatible CLF–CBF pair, and Theorem \ref{thm:clbf} (via Lemma \ref{lem:Tx}) strengthens the classical converse Lyapunov argument to enforce the required boundary condition.

\subsection{Extracting a safe stabilizing controller from CLF-CBF}

\begin{assumption}[Strict compatibility] \label{Assmp:SC}
The set\, $\mathcal{C}$ defined by (\ref{safe-set}) is compact and contains the origin in its interior. Let $\D$ be an open set containing $\C$ and $V:\,\D \rightarrow \mathbb{R}$ be a continuously differentiable function. Furthermore, suppose that the following strict compatibility assumption holds:\\
1) For every $x \in \mathcal{D} \setminus \{0\}$, there exists $u\in\Real^m$ for which 
    \begin{equation}\label{eq:CLFInt}
        L_f V(x) + L_g V(x) u < 0.
    \end{equation}
2) For every $x \in \partial \C$, there exists $u\in\Real^m$  for which 
    \begin{equation}\label{eq:CLFBoundary}
        L_f V(x) + L_g V(x) u < 0,
    \end{equation}
    \begin{equation}\label{eq:CBF}
        L_f h(x)+L_g h(x) u > 0.
    \end{equation}
\end{assumption}

\begin{rem}
We impose the strict form of inequality~\eqref{eq:CBF}, rather than the non-strict form~\eqref{eq:CBF-condition}, to ensure robust invariance of $\partial \mathcal{C}$. In particular, the vector field along $\partial \mathcal{C}$ must point strictly inward under a suitable control input so trajectories cannot remain tangent to the boundary. On the other hand, we relax the compatibility conditions in~\cite{mestres2022optimization,ong2019universal} to hold only on $\partial \C$, as this weaker requirement still guarantees controlled forward invariance (see Remark~\ref{rem:boundary}). These compatibility conditions preclude the impossibility results in \cite{braun2017existence,braun2020comment} for the existence of a CLBF; see the proof of Proposition \ref{prop:controller} below for the extraction of a $C^1$ safe stabilizing controller under these assumptions. Building on Proposition \ref{prop:controller}, Theorem \ref{thm:clbf} then establishes the existence of a converse CLBF with a prescribed boundary condition in the sense of Definition \ref{defn:CLBF}.
\end{rem}

To extract a controller with the desired regularity around the origin, we need one of the following assumptions.

\begin{assumption}[Stabilizable linearization]\label{as:stabilizability}
Let $f$ and $g$ be continuously differentiable, $A := \frac{\partial f}{\partial x}(0)$, and $B := g(0)$. We assume that $(A, B)$ is stabilizable. Furthermore, let $P=\nabla^2 V(0)$, where $V$ is from Assumption \ref{Assmp:SC}. There exists some matrix $K\in\Real^{m\times n}$ such that $\bar A:=A+BK$ is Hurwitz and $P\bar A+\bar A^\top P$ is negative definite. 
\end{assumption}

\begin{assumption}[Small control property]\label{as:small-contrl}
For each $\eps>0$, there exists some $\delta>0$ such that, if $x\neq 0$ and $\abs{x}<\delta$, then there exists some $u$ such that $\abs{u}<\eps$ and (\ref{eq:CLFInt}) holds. 
\end{assumption}

Under Assumption \ref{Assmp:SC} and either Assumption \ref{as:stabilizability} or \ref{as:small-contrl}, we can show that there exists a safe stabilizing controller with respect to the safe set $\C$ with desired properties. 

\begin{proposition}\label{prop:controller}
Let Assumption \ref{Assmp:SC} hold. 
\begin{enumerate}[(a)]
    \item If Assumption \ref{as:stabilizability} also holds, then there exists a $C^1$ controller $u:\,\mathbb{R}^n \to \mathbb{R}^m$ such that $x=0$ is exponentially stable and the set\, $\C$ is forward invariant for the closed-loop system (\ref{eq:clsys}) and contained in the domain of attraction of $x=0$. 
    \item If Assumption \ref{as:small-contrl} also holds, then there exists a  controller $u:\,\mathbb{R}^n \to \mathbb{R}^m$ that is continuous on $\mathbb{R}^n$ and $C^1$ on $\mathbb{R}^n\setminus\set{0}$ such that $x=0$ is asymptotically stable and the set\, $\C$ is forward invariant for the closed-loop system (\ref{eq:clsys}) and contained in the domain of attraction of $x=0$.
\end{enumerate}
\end{proposition}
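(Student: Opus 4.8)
The plan is to build a single continuous selection $u(x)$ from the pointwise feasibility guaranteed by Assumption \ref{Assmp:SC}, and then patch in the right local behavior near the origin using either Assumption \ref{as:stabilizability} or \ref{as:small-contrl}. I would proceed in three stages.

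\textbf{Stage 1: A continuous safe-stabilizing selection away from the origin.}
For each $x \in \D \setminus \{0\}$, condition \eqref{eq:CLFInt} says the set $S_1(x) := \{u : L_f V(x) + L_g V(x) u < 0\}$ is nonempty (an open half-space or all of $\Real^m$), and for $x \in \partial\C$ the set $S_2(x) := \{u : L_f V(x) + L_g V(x)u < 0,\ L_f h(x) + L_g h(x) u > 0\}$ is nonempty; by continuity of $f,g,V,h$, the latter remains nonempty on an open neighborhood $N$ of $\partial\C$ (intersected with $\D\setminus\{0\}$). These set-valued maps have open convex values and are lower hemicontinuous, so Michael's selection theorem (or an explicit partition-of-unity construction of the Sontag type, since the values are half-spaces) yields a continuous $u_0:\D\setminus\{0\}\to\Real^m$ with $u_0(x)\in S_1(x)$ for all $x$ and $u_0(x)\in S_2(x)$ for $x$ in a neighborhood of $\partial\C$; on a control-affine structure one can take the Sontag-type formula built from $V$, smoothed off, to also get smoothness on $\D\setminus\{0\}$. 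Extend $u_0$ to all of $\Real^n\setminus\{0\}$ arbitrarily (it only needs the right behavior on a compact neighborhood of $\C$). Along $\partial\C$ we then have $L_fh + L_gh\,u_0 > 0$, and since $\nabla h \ne 0$ on $\partial\C$ (which follows because $h$ is a CBF with the strict inequality \eqref{eq:CBF} — if $\nabla h(x)=0$ then $L_gh(x)=0$ and $L_fh(x)=0$, contradicting \eqref{eq:CBF}), Corollary \ref{cor:invariance} gives forward invariance of $\C$.

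\textbf{Stage 2: Fixing the origin.}
The selection $u_0$ need not extend continuously to $0$ nor give (exponential/asymptotic) stability there. For part (a), pick $K$ from Assumption \ref{as:stabilizability} so $\bar A = A+BK$ is Hurwitz and $P\bar A + \bar A^\top P \prec 0$ with $P = \nabla^2 V(0)$; a second-order Taylor expansion of $V$ shows $\tfrac{d}{dt}V(\phi(t,x)) = \nabla V(x)^\top(f(x)+g(x)Kx) < 0$ for $x$ in a punctured neighborhood $B_\rho\setminus\{0\}$, i.e., $Kx \in S_1(x)$ there, and $Kx \to 0$ as $x\to0$. For part (b), the small control property directly provides, for each $\eps$, a $\delta$ so that near the origin one may choose controls of size $<\eps$ still satisfying \eqref{eq:CLFInt}; the standard Artstein--Sontag argument (as in \cite{sontag1989universal}) then yields a selection continuous at $0$ with $u(0)=0$ and smooth elsewhere, making the origin asymptotically stable.

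\textbf{Stage 3: Patching.}
Choose a smooth cutoff $\beta:\Real^n\to[0,1]$ with $\beta\equiv1$ on $B_{\rho/2}$ and $\beta\equiv0$ outside $B_\rho$, where $B_\rho$ is contained in the interior of $\C$ and small enough that the local controller ($Kx$ in case (a), the small-control selection in case (b)) satisfies \eqref{eq:CLFInt} on $B_\rho\setminus\{0\}$. Set $u(x) := \beta(x)\,u_{\mathrm{loc}}(x) + (1-\beta(x))\,u_0(x)$. Because $S_1(x)$ is convex and both ingredients lie in it on the overlap region, the convex combination still satisfies \eqref{eq:CLFInt}, so $\nabla V$ is a strict Lyapunov function for the closed loop on $\C\setminus\{0\}$; since $\C$ is compact and forward invariant (the patch only modifies $u$ near $0$, away from $\partial\C$, so Stage 1's invariance is untouched), LaSalle/standard Lyapunov arguments give that every trajectory from $\C$ converges to the origin, i.e. $\C$ lies in the domain of attraction. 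The local behavior is inherited from $u_{\mathrm{loc}}$: exponential stability via the Hurwitz $\bar A$ in (a), asymptotic stability with a controller continuous at $0$ in (b). Smoothness on $\Real^n\setminus\{0\}$ follows since all pieces are smooth there and $\beta$ is smooth.

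\textbf{Main obstacle.}
The delicate point is Stage 1: producing a \emph{single} selection that is simultaneously a stabilizing selection everywhere on $\D\setminus\{0\}$ \emph{and} points strictly inward along $\partial\C$, with enough regularity (at least continuous, ideally smooth off the origin) to invoke Corollary \ref{cor:invariance} and to be patched. One must check lower hemicontinuity of the combined constraint map near $\partial\C$ and reconcile the possibly incompatible "recipes" for $S_1$ alone versus $S_1\cap S_2$ on the overlap — handled by noting $S_2(x)\subset S_1(x)$, so a selection into $S_2$ near the boundary and into $S_1$ elsewhere, glued by a partition of unity with convex values, stays feasible throughout. Verifying that this gluing preserves smoothness away from the origin (rather than mere continuity) is the part requiring the most care.
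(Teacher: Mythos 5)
Your proposal follows essentially the same route as the paper: use pointwise feasibility from Assumption~\ref{Assmp:SC} to build a feedback away from the origin via a partition-of-unity/convex-combination argument (valid because $\varphi$ and $\psi$ are affine in $u$ and $S_2(x)\subset S_1(x)$), then patch in the locally stabilizing controller ($Kx$ in case~(a), the Artstein--Sontag small-control selection in case~(b)) with a smooth cutoff supported in $\operatorname{Int}\C$. The one place you leave hanging --- smoothness of the Stage~1 selection on $\D\setminus\{0\}$ --- is resolved in the paper by a trick you gesture at but don't commit to: cover $\D\setminus B_{r_1}(0)$ by small balls $U_x$ on each of which a single \emph{constant} control $u_x$ satisfies the relevant strict inequalities with a uniform margin $\epsilon_x/2$, and set $u_1=\sum_\alpha \rho_\alpha u_\alpha$ for a smooth locally finite partition of unity $\{\rho_\alpha\}$ subordinate to $\{U_x\}$; since the $u_\alpha$ are constants, $u_1$ is automatically smooth, and affinity of $\varphi,\psi$ in $u$ keeps the inequalities strict. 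Michael's selection theorem, which you offer as the primary tool, only yields continuity and would not by itself give the smoothness required off the origin; the Sontag universal formula handles $V$ but not the CBF constraint on $\partial\C$, so the constant-control partition of unity is the construction that actually closes the argument.
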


\begin{proof}
    We first show part (a). Denote the CLF condition as 
    $\varphi(x, u) = L_f V(x) + L_g V(x) u$ 
    and the CBF condition as 
    $\psi(x, u) = - L_f h(x) - L_g h(x) u.$ 
    By Assumption \ref{as:stabilizability}, we have that (\ref{eq:sys}) is locally exponentially stabilizable by a linear controller $u_0(x) := Kx$, where $K \in \mathbb{R}^{m \times n}$, and $V$ is a local Lyapunov function for exponential stability of the closed-loop system. This implies that there exists a radius $r >0$ such that the controller $u_0(x)$ satisfies 
    $
    \varphi(x, u_0(x))\le -\mu V(x),
    $
    on $B_r(0)$ for some $\mu>0$. For some radius $0 < r_1 < r$, set $U_0 = \{ x \in \mathcal{C} : \|x\| < r \}$
    and $U_1 = \{ x \in \mathcal{D} : \|x\| > r_1 \}.$ Note that $U_0 \cup U_1 = \mathcal{D}$.
    
    Now we will construct a controller by a partition of unity argument on the set $U_1$. This essentially patches local feedback laws, each satisfying the CLF and CBF inequalities on small neighborhoods, into a single $C^1$ controller on $\mathcal D$.
    
    By Assumption \ref{Assmp:SC}, for all $x \in U_1$, there exists $u_x$ such that $\varphi(x, u_x) < 0$. In particular, because of (\ref{eq:CLFBoundary}) and (\ref{eq:CBF}), for $x \in\partial\mathcal{C}$, we have that 
    $\varphi(x, u_x) < 0$ and $\psi(x, u_x) < 0.$ 
     Set $\epsilon_{x} = \min \{-\varphi(x, u_x), - \psi(x, u_x) \} > 0$ for $x \in\partial\mathcal{C}$ and $\epsilon_{x} = -\varphi(x, u_x)$ for $x \in \mathcal{D} \setminus (\partial \mathcal{C} \cup \{ 0\})$. Since $\varphi$ and $\psi$ are continuous in the first argument, this implies that $\forall x \in U_1 \cap\partial\mathcal{C}^c$ there exists $\delta_{x} > 0$ such that, for all $y \in U_{x} := B_{\delta_{x}}(x) \subset U_1 \cap\partial\mathcal{C}^c$, we have that $\varphi(y, u_{x}) \leq -\epsilon_{x}/2$. Similarly, for $x \in\partial\mathcal{C}$, there exists $\delta_x > 0$ such that, for all $y \in U_{x} := B_{\delta_{x}}(x) \cap U_1$, we have that $\varphi(y, u_{x}) \leq -\epsilon_{x}/2$ and $\psi(y, u_{x}) \leq -\epsilon_{x}/2$.

    By construction, $\{U_{x}\}_{x}$ is an open cover of $U_1$. By a partition of unity (see, e.g., \cite[Theorem 2.23]{lee2003introduction}), we can find a locally finite collection of $C^1$ functions $\{\rho_\alpha:\,U_1 \to [0,1] \}_{\alpha \in U_1}$ for which $\operatorname{supp}(\rho_x) \subset U_{x}$ and $\sum_{\alpha \in U_1} \rho_\alpha(x) = 1$ for all $x \in U_1$. Form the function 
    $
    u_1(x) = \sum_{\alpha \in U_1} \rho_\alpha(x) u_{\alpha},
    $
    where $u_\alpha$ are the vectors used for setting $\epsilon_x$ as above. Clearly, $u_1$ is $C^1$. Since $\varphi$ is affine in $u$, it follows that 
    $
        \varphi(x, u_1(x)) = \sum_{\alpha \in U_1} \rho_{\alpha}(x) \varphi(x, u_{\alpha}).
    $
    For $x \in U_1$, there exists a finite set of indices $I(x) \subset U_1$ for which $\rho_\alpha(x) = 0$ for $\alpha \notin I(x)$ and $\varphi(x, u_\alpha) \le -\epsilon_\alpha/2$ for $\alpha \in I(x)$. Taking $\epsilon_1(x) := \min\{\epsilon_\alpha/2:\,{\alpha \in I(x)}\} > 0$, we obtain a function $\epsilon_1(x) > 0$ such that 
    $
    \varphi(x, u_1(x)) \leq -\epsilon_1(x) < 0.
    $  
    Furthermore, since by construction $x \in\partial\mathcal{C}$ can only be covered by neighborhoods $U_x$ for $x \in\partial\mathcal{C}$, by a similar argument, it follows that $\varphi(x, u_{1}(x)) \leq - \epsilon_1(x)$ and $\psi(x, u_{1}(x)) \leq - \epsilon_1(x)$ for all $x \in\partial\mathcal{C}$. 
    
    We now extend $u_1$ to a $C^1$ function defined on $\mathcal{D}$. Since $U_0 \cup U_1$ is an open cover of $\mathcal D$, by a partition of unity again, we can find two $C^1$ weight functions $\rho'_0, \rho'_1 : \mathcal{D} \to [0,1]$ with $\rho'_0 + \rho'_1 = 1$, $\operatorname{supp}(\rho'_0) \subset U_0, \operatorname{supp}(\rho'_1) \subset U_1$ and define the global controller $k: \mathcal{D} \to \mathbb{R}^m$ by 
    $
    k(x) = \rho'_0(x) u_0(x) + \rho'_1(x) u_1(x).
    $
    It follows that, on $B_{r_1}(0)\subset U_0\setminus U_1$, we have $k(x)=u_0(x)$ and $\varphi(x, k(x))\le -\mu V(x)$. Similar to the argument above, setting $\epsilon_2 := \min_{x\in \overline{U_0\cap U_1}}(\mu V(x),\epsilon_1(x))>0$, we have $\varphi(x, k(x))\le -\epsilon_2$ on $U_0\cap U_1$. On $U_1\setminus U_0$, we have $k(x)=u_1(x)$ and $\varphi(x, k(x))\le \epsilon_1(x)<0$ as shown.

    Finally, we can extend easily $k(x)$ from a neighborhood of $\mathcal C$ to $\mathbb{R}^n$, while preserving the $C^1$ regularity of $k(x)$. This finishes the proof of part (a).

    To prove part (b), note that, by \cite{sontag1989universal}, 
    the small control property guarantees the existence of a controller $\hat u_0(x)$ that is continuous at the origin and $C^1$ on $\mathcal{D} \setminus \{0\}$. Moreover, the controller is guaranteed to satisfy (\ref{eq:CLFInt}). By fixing radii $0 < r_1 < r$ such that $B_r(0) \subset \operatorname{Int}(\mathcal{C})$, a similar partition of unity argument results in the controller
    $
    k(x) = \rho_0(x) \hat u_0(x) + \rho_1(x) u_1(x),
    $ 
    where $\operatorname{supp}(\rho_0(x)) \subset U_0$, $\operatorname{supp}(\rho_1(x)) \subset U_1$, $\hat u_0(x)$ satisfies the CLF condition on $B_r(0)$ and $u_1(x)$ satisfies the CLF condition on $U_1$ and jointly satisfies the CLF and CBF conditions on $\delta \mathcal{C}$. It is then straightforward to show that $k(x)$ satisfies (\ref{eq:CLFInt}) on $\mathcal{D} \setminus \{0\}$ and conditions (\ref{eq:CLFBoundary}, \ref{eq:CBF}) on $\partial \mathcal{C}$. Therefore, the control $u_0(x)$ is continuously differentiable on $\mathcal{D} \setminus \{0\}$ and continuous at the origin. We can similarly extend $k(x)$ from a neighborhood of $\mathcal C$ to $\mathbb{R}^n$. 
\end{proof}

\begin{rem}\label{rem:controller}
We highlight the differences between Proposition~\ref{prop:controller} and similar results in the literature, such as Corollary~4.2.2 of~\cite{Ong2022uniting}, which extends Artstein's theorem~\cite{artstein1983stabilization} (see also \cite{sontag1989universal}). First, safe exponential stabilization with a controller that is $C^1$ at the origin as formulated in part (a) Proposition~\ref{prop:controller} has not been presented in the context combining CLF and CBF. Second, the CBF-CLF comparability condition is required only on the boundary of the safe set, while the controller $k(x)$ is constructed globally on the open set $\mathcal{D}$. This is weaker than the compatibility condition considered in, e.g., \cite{li2023graphical,dai2024verification},  and has potential to reduce computational conservatism; see \cite{liu2025computing}.
\end{rem}

\subsection{Construction of a $C^1$ CLBF}

We rely on the following regularity property of the hitting time of the boundary of the safe set $\C$ for solutions of the closed-loop system. The proof is provided in the Appendix.
\begin{lemma}\label{lem:Tx}
Let the assumptions of Proposition \ref{prop:controller} hold. Let $\D$ denote the domain of attraction of $x=0$ for the closed-loop system (\ref{eq:clsys}) under the controller constructed by Proposition \ref{prop:controller}. For each $x\in \D\setminus\set{0}$, there exists a unique $T(x)\in\Real$ such that $\phi(T(x),x)\in \partial \C$, i.e., $h(\phi(T(x),x))=0$, and $T(x)$ is continuously differentiable in $x$ on $\D\setminus\set{0}$. 
Furthermore, if Assumption \ref{as:stabilizability} holds, then $\abs{\nabla T(x)}=\mathcal{O}(1/\abs{x})$ as $x\ra 0$. 
\end{lemma}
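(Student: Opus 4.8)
The plan is to establish existence and uniqueness of $T(x)$, upgrade it to $C^1$ regularity via the implicit function theorem, and read the gradient bound off a sensitivity estimate along the trajectory. \emph{Existence and uniqueness.} Fix $x\in\D\setminus\{0\}$ and consider the scalar map $t\mapsto h(\phi(t,x))$. The controller $k$ from Proposition~\ref{prop:controller} satisfies $L_f h(y)+L_g h(y)k(y)>0$ at every $y\in\partial\C$, so $\frac{d}{dt}h(\phi(t,x))>0$ whenever $\phi(t,x)\in\partial\C$; every crossing of $\partial\C$ is therefore transversal and inward. Uniqueness follows at once: two crossing times $t_1<t_2$ would make $h(\phi(\cdot,x))$ positive just after $t_1$ and negative just before $t_2$, forcing an intermediate zero at which $\frac{d}{dt}h(\phi(\cdot,x))\le0$, a contradiction. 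For existence, if $h(x)\le0$ then $\phi(t,x)\to0\in\operatorname{Int}(\C)$ since $x\in\D$, so $h(\phi(t,x))>0$ for large $t$ and the intermediate value theorem yields a first time $T(x)\ge0$ with $\phi(T(x),x)\in\partial\C$. If $h(x)>0$, choose $\eps>0$ with $\sup_{\abs{y}\le\eps}V(y)<V(x)$ (possible since the CLF $V$ is positive definite); as $V$ is nonincreasing along forward closed-loop trajectories, $V(\phi(-s,x))\ge V(x)$ for all $s\ge0$, so the backward trajectory avoids $B_\eps(0)$; if it also remained in $\C$, it would stay in the compact set $\C\setminus B_\eps(0)$, on which $\frac{d}{dt}V(\phi)\le-c<0$ by Proposition~\ref{prop:controller}, and integration would give $V(\phi(-s,x))\ge V(x)+cs$ for all $s\ge0$, contradicting the boundedness of $V$ on $\C$. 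Hence the backward trajectory leaves $\C$, and continuity of $h$ produces the unique crossing time $T(x)<0$.

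\emph{$C^1$ regularity.} The trajectory arc joining $x$ to $z:=\phi(T(x),x)$ stays away from the origin (on the backward portion $V$ stays at least $V(x)>0$; on the forward portion the trajectory has not yet converged), so $F$ is $C^1$ on a neighborhood of this arc and therefore the flow, hence $G(t,y):=h(\phi(t,y))$, is jointly $C^1$ there. Since $G(T(x),x)=0$ and $\partial_t G(T(x),x)=L_f h(z)+L_g h(z)k(z)>0$ because $z\in\partial\C$, the implicit function theorem gives a $C^1$ map $\tau$ defined near $x$ with $\tau(x)=T(x)$ and $h(\phi(\tau(y),y))=0$; uniqueness forces $\tau\equiv T$ locally, so $T\in C^1(\D\setminus\{0\})$.

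\emph{Gradient bound.} For $x$ near the origin we have $x\in\operatorname{Int}(\C)$ and $T(x)<0$. Writing $z:=\phi(T(x),x)$ and $\Phi(t,x):=\frac{\partial\phi}{\partial x}(t,x)$ for the variational matrix, differentiation of $h(\phi(T(x),x))=0$ gives
\[
\nabla T(x)=-\frac{\Phi(T(x),x)^{\top}\nabla h(z)}{L_f h(z)+L_g h(z)k(z)}.
\]
The denominator is $\dot h$ restricted to the compact set $\partial\C$, where it is continuous and strictly positive, hence bounded below; $\nabla h$ is bounded on $\partial\C$; so it suffices to bound $\norm{\Phi(T(x),x)}=\mathcal{O}(1/\abs{x})$ as $x\to0$. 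Under Assumption~\ref{as:stabilizability} the closed-loop field is $F(x)=\bar A x+o(\abs{x})$ with $\bar A$ Hurwitz and $P\bar A+\bar A^{\top}P\prec0$, so $x^{\top}Px$ decays exponentially along closed-loop trajectories near the origin. This controls two quantities: the blow-up rate of the hitting time, $\abs{T(x)}=\mathcal{O}(\log(1/\abs{x}))$, and, via the variational equation $\dot\Phi=DF(\phi)\Phi$ and the comparison $\frac{d}{dt}(\Phi^{\top}P\Phi)\preceq-\mu\,\Phi^{\top}P\Phi$, the growth of $\Phi$ while the trajectory stays near the origin. I would then use the cocycle identity $\Phi(T(x),x)=\Phi(-T(x),z)^{-1}$, split the forward interval $[0,-T(x)]$ at a time --- uniformly bounded over $z\in\partial\C$ by compactness --- after which the trajectory from $z$ has entered a fixed small neighborhood of the origin, bound the initial piece by a Gr\"onwall estimate and the tail piece by the exponential bound, and combine with the logarithmic bound on $\abs{T(x)}$. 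Extracting the precise rate $1/\abs{x}$ from the exponential decay of $V$, rather than a larger power, is the delicate point and the main obstacle of the proof; the existence, uniqueness, and implicit-function steps are routine by comparison.
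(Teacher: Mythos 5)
Your existence, uniqueness, and implicit-function-theorem steps are correct and essentially mirror the paper's. Two small differences: for $x$ in $\operatorname{Int}(\C)\setminus\{0\}$, you rule out the backward orbit staying in $\C$ by integrating the CLF decrease on the compact annulus $\C\setminus B_\eps(0)$, while the paper invokes a uniform attraction time $T_\C(\eps)$ for the compact set $\C$ and derives a contradiction from the semigroup property; both are fine. Your transversality argument for uniqueness and your use of $\psi(t,x)=h(\phi(t,x))$ in the implicit function theorem coincide with the paper.

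The genuine gap is in the gradient estimate, exactly where you flag the ``delicate point.'' You correctly derive the same identity as the paper's equation~\eqref{eq:dT}, note the denominator is bounded below and $\nabla h$ bounded on $\partial\C$, and reduce to bounding $\norm{\Phi(T(x),x)}$; but the estimate is then left as a sketch. The paper's actual route differs from yours: it does \emph{not} use the cocycle identity $\Phi(T(x),x)=\Phi(-T(x),z)^{-1}$ or a comparison for $\Phi^\top P\Phi$, but instead (i) applies the crude Gr\"onwall bound $\norm{\Phi(t)}\le e^{\Lambda\abs{t}}$ on $[T(x),0]$ with $\Lambda=\sup_\C\norm{DF}$, and (ii) establishes $\abs{T(x)}\le c_1+c_2\log(1/\abs{x})$ by splitting the backward trajectory at a small ellipsoidal level set $\{V_P=c\}$ (with $V_P=x^\top Px$, $PA+A^\top P=-I$), estimating the time $T_P(x)$ to reach that level set via the exponential decay of $V_P$, and using $T(x)=T_P(x)+T(\phi(T_P(x),x))$ together with $\hat T:=\min_{x^\top Px=c}T(x)$. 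Your hesitation about the exponent is well founded: even the paper's combination of (i) and (ii) yields $\norm{\Phi(T(x))}=\mathcal{O}(\abs{x}^{-\Lambda c_2})$ rather than visibly $\mathcal{O}(1/\abs{x})$, and your proposed quadratic comparison $\frac{d}{dt}(\Phi^\top P\Phi)\preceq-\mu\,\Phi^\top P\Phi$ faces the analogous mismatch that the decay rate of $V_P$ along trajectories and the backward expansion rate of $\Phi$ are governed by different generalized eigenvalues of $\bar A^\top P+P\bar A$. Since this step carries the entire content of the last claim, the proposal as written does not prove it; you should at minimum carry out the paper's two ingredients explicitly (Gr\"onwall on $[T(x),0]$ and the level-set splitting with $\hat T$), and if you want the sharp exponent you will need an argument that ties the two rates together rather than bounding them independently.
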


The role of the hitting time \(T(x)\) is to enable trajectory-wise normalization of the converse Lyapunov function so that it satisfies the prescribed boundary condition on \(\partial \C\).

\begin{thm}\label{thm:clbf}
Suppose that Assumption \ref{Assmp:SC} holds, and either Assumption \ref{as:small-contrl} or \ref{as:stabilizability} holds. Then there exists an open set $\D\supset \C$ and a continuously differentiable function $W:\,\D\ra\Real$ such that $W$ satisfies all the conditions in Definition \ref{defn:CLBF}, i.e., $W$ is a CLBF on $\D$ for (\ref{eq:sys}) w.r.t. to the safe set $\C$.
\end{thm}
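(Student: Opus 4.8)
The plan is to build $W$ directly from the boundary hitting time $T(x)$ of Lemma~\ref{lem:Tx}, with $\D$ taken to be the domain of attraction of the origin for the closed loop under the controller of Proposition~\ref{prop:controller} (which contains $\C$, since $\C$ is forward invariant and lies in the basin). Fix a $C^\infty$ strictly increasing $\beta:\Real\ra(0,\infty)$ with $\beta(0)=1$ and $\beta(s)\ra0$ as $s\ra-\infty$ — for instance $\beta(s)=e^{\lambda s}$ with a constant $\lambda>0$ chosen below — and put $W(x):=\beta(T(x))$ for $x\in\D\setminus\set{0}$ and $W(0):=0$; this will later be modified only on a small neighborhood of the origin inside $\operatorname{Int}\C$. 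The three conditions of Definition~\ref{defn:CLBF} are then essentially free. From the construction of $T$ one has $T(x)<0$ on $\operatorname{Int}\C\setminus\set{0}$, $T(x)=0$ on $\partial\C$, and $T(x)>0$ on $\D\setminus\C$, so strict monotonicity of $\beta$ gives $\set{W\le1}=\C$, $\set{W=1}=\partial\C$, and $W>0$ on $\D\setminus\set0$. From the semigroup identity $T(\phi(t,x))=T(x)-t$ one gets $\nabla W(x)^\top F(x)=\frac{d}{dt}\big|_{t=0}\beta(T(x)-t)=-\beta'(T(x))<0$, so taking $u=u(x)$ the constructed feedback gives $\inf_{u}\nabla W(x)^\top[f(x)+g(x)u]<0$ on $\D\setminus\set0$. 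Only the regularity of $W$ needs real work.

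On $\D\setminus\set0$, $W=\beta\circ T$ is $C^1$ by Lemma~\ref{lem:Tx}. Continuity of $W$ at the origin amounts to $T(x)\ra-\infty$ as $x\ra0$: for any $t_1>0$ and $\eps>0$ with $B_\eps(0)\subset\operatorname{Int}\C$, continuous dependence on initial data together with $\phi(\cdot,0)\equiv0$ forces $\phi(-s,x)\in B_\eps(0)\subset\C$ for all $s\in[0,t_1]$ once $\abs{x}$ is small enough, so the backward exit time $-T(x)$ exceeds $t_1$. The delicate point is $C^1$-regularity at the origin, where the two auxiliary assumptions play different roles. Under Assumption~\ref{as:stabilizability} the feedback of Proposition~\ref{prop:controller}(a) is smooth, so the closed-loop field $F$ is Lipschitz on the compact set $\C$, say with constant $L$; then $\abs{\phi(-s,x)}\le\abs{x}e^{Ls}$ while the backward trajectory stays in $\C$, which forces $-T(x)\ge\frac1L\log(\eps/\abs{x})$, i.e.\ $T(x)\le\frac1L\log\abs{x}+C_0$ near $0$. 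Combining this with the bound $\abs{\nabla T(x)}=\mathcal{O}(1/\abs{x})$ of Lemma~\ref{lem:Tx} and choosing $\lambda>L$ yields $W(x)=\mathcal{O}(\abs{x}^{\lambda/L})=o(\abs{x})$ and $\abs{\nabla W(x)}=\lambda\,\beta(T(x))\abs{\nabla T(x)}=\mathcal{O}(\abs{x}^{\lambda/L-1})\ra0$, so $W$ extends $C^1$ across the origin with $\nabla W(0)=0$.

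Under Assumption~\ref{as:small-contrl} the feedback is only continuous at the origin, $F$ need not be Lipschitz there, and Lemma~\ref{lem:Tx} provides no rate for $T$ or bound for $\nabla T$; here I would instead patch $W$ near $0$ using the CLF $V$. Since the controller of Proposition~\ref{prop:controller}(b) satisfies \eqref{eq:CLFInt}, $V$ is a $C^1$ positive definite Lyapunov function for the closed loop, with $\nabla V^\top F<0$ on $\D\setminus\set0$. Choose $c>0$ with $\Omega:=\set{V\le c}\subset\operatorname{Int}\C$, a nonincreasing $C^\infty$ cutoff $\mu:[0,\infty)\ra[0,1]$ equal to $1$ on $[0,c/2]$ and to $0$ near $[c,\infty)$, and a $C^\infty$ strictly increasing $\gamma$ with $\gamma(0)=0$ and $\gamma(c)\le m:=\min_{\set{c/2\le V\le c}}\beta(T)>0$, and redefine, for $x\in\Omega$,
\[
W(x):=\mu(V(x))\,\gamma(V(x))+\bigl(1-\mu(V(x))\bigr)\beta(T(x)),
\]
keeping $W=\beta\circ T$ on $\D\setminus\Omega$. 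A short computation gives $\nabla W^\top F=\mu'(V)\bigl[\gamma(V)-\beta(T)\bigr]\nabla V^\top F+\mu(V)\gamma'(V)\nabla V^\top F-\bigl(1-\mu(V)\bigr)\beta'(T)$; on the transition set one has $\gamma(V)-\beta(T)\le\gamma(c)-m\le0$ while $\mu'(V)\nabla V^\top F\ge0$, so the first term is $\le0$, and the remaining two are $\le0$ with at least one strictly negative away from $0$, whence $\nabla W^\top F<0$ on $\Omega\setminus\set0$. The same choice $\gamma(c)\le m$ keeps $W<1$ on $\Omega\subset\operatorname{Int}\C$, so the identities $\set{W\le1}=\C$ and $\set{W=1}=\partial\C$ are unaffected (the patch lives strictly inside $\C$), $W$ stays positive definite, and near the origin $W=\gamma\circ V$ with $\nabla W(x)=\gamma'(V(x))\nabla V(x)\ra\gamma'(0)\nabla V(0)=0$, so $W$ is $C^1$ at $0$. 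This patched construction in fact also works under Assumption~\ref{as:stabilizability}, so the two cases can be merged.

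The step I expect to be the main obstacle is precisely this $C^1$ behavior at the origin: the hitting-time function $T$ is only $C^1$ off the origin and its gradient may blow up there, so one must either dominate the blow-up with a sufficiently flat $\beta$ — which relies on the linearization estimate $\abs{\nabla T(x)}=\mathcal{O}(1/\abs{x})$ available only under stabilizable linearization — or splice in the CLF $V$ near the origin while checking that the splice preserves both the strict decrease $\nabla W^\top F<0$ and the prescribed level sets on $\partial\C$. Everything else reduces to the sign pattern of $T$ and the flow identity $T(\phi(t,x))=T(x)-t$.
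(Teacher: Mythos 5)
Your construction is correct but genuinely different from the paper's. The paper builds $W$ as a normalized converse Lyapunov function: it takes a smooth converse Lyapunov function $V$ for the closed loop, shows $W(x)=V(x)/V(\phi(T(x),x))$ equals $\int_0^\infty \omega_1(\phi(t,x))\,dt$ for a suitable normalized dissipation rate $\omega_1$, and deduces $C^1$ regularity at the origin (under Assumption~\ref{as:stabilizability}) by combining the $\mathcal{O}(1/|x|)$ bound on $\nabla T$, the $\mathcal{O}(\log(1/|x|))$ bound on $T$, and the fact that $\omega(x)=o(|x|)$. You instead take $W=\beta\circ T$ with $\beta$ a monotone warping of the hitting time, which is more elementary: the level-set conditions fall out of the sign pattern of $T$ and the decrease condition is immediate from $T(\phi(t,x))=T(x)-t$, with no reference to any Lyapunov function outside a neighborhood of the origin. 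The paper's route yields the PDE characterization~\eqref{eq:lyap_pde} with a specific forcing $\omega_1$ tied to the converse Lyapunov function, which is closer to the narrative of the paper; your route exposes more clearly that what matters is simply the hitting time and its regularity. Your $C^1$ estimate at the origin under Assumption~\ref{as:stabilizability} works: the Gronwall bound $|\phi(-s,x)|\le|x|e^{Ls}$ gives the upper bound $T(x)\le L^{-1}\log|x|+C_0$ (the paper only records the complementary lower bound $|T(x)|\le c_1+c_2\log(1/|x|)$, which alone would not suffice for your argument, so you were right to derive the upper one), and choosing $\lambda>L$ in $\beta(s)=e^{\lambda s}$ kills the $\mathcal{O}(1/|x|)$ blow-up of $\nabla T$. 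Your cutoff patch under Assumption~\ref{as:small-contrl} is also sound, with sign analysis of $\nabla W^\top F$ on the transition annulus carried out correctly; this replaces the paper's appeal to a smoothening reparametrization $\rho\circ W$ from \cite{teel2000smooth}. One small imprecision: you set $\Omega:=\{V\le c\}$ and take a minimum of $\beta\circ T$ over $\{c/2\le V\le c\}$, but Assumption~\ref{Assmp:SC} only gives a $C^1$ CLF $V$ on $\D$ and does not guarantee its sublevel sets near $0$ are compact or stay inside $\C$ globally. You should either restrict to the connected component of $\{V<c\}$ containing the origin for $c$ small, or (cleaner, and what the paper implicitly does) replace the CLF $V$ from Assumption~\ref{Assmp:SC} in the patch with a smooth converse Lyapunov function for the closed-loop system, whose sublevel sets are compact and nested inside $\operatorname{Int}(\C)$ for small $c$.
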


\begin{proof}
Consider the vector field $F$ of the closed‐loop system (\ref{eq:clsys}) under the controller constructed in Proposition \ref{prop:controller}, and let $\D$ denote the domain of attraction of the origin. By classical results on converse Lyapunov theorems; see, e.g., \cite{lin1996smooth,teel2000smooth,kurzweil1963inversion}, there exists a smooth (i.e., $C^\infty$) Lyapunov function $V:\,\D\ra\Real$ for (\ref{eq:clsys}). 

We first assume that Assumption \ref{as:stabilizability} holds. Define $\omega(x)=-\nabla V(x)^{\top}  F(x)$ for $x\in \D$. Then $\omega$ is positive definite on $\D$. It can be shown that (see, e.g., \cite[Proposition 2]{liu2025physics})
$
V(x) = \int_0^\infty \omega(\phi(t,x))dt, 
$
for all $x\in \D$. For $x\in \D\setminus\set{0}$, define 
$
\omega_1(x) = \frac{\omega(x)}{V(\phi(T(x),x))}, 
$
where $T(x)$ is from Lemma \ref{lem:Tx}. 

Since $V$ is $C^\infty$ and $F$ is $C^1$, we have that $\nabla\omega$ is $C^1$ on $\D$. Moreover, $\nabla\omega(0)=0$, since $\nabla V(0)=0$ (because $0$ is a local minimum of $V$) and $F(0)=0$. Hence $\omega(x)=o(\lvert x\rvert)$ as $x\to 0$. Because $\phi(T(x),x)\in\partial\C$ and the denominator remains positive and bounded away from zero, $\omega_1$ is continuous and positive on $\D\setminus\{0\}$, with $\omega_1(x)=o(\lvert x\rvert)$ as $x\to 0$. Defining $\omega_1(0)=0$ makes $\omega_1$ continuous on $\D$ and differentiable at $0$, with $\nabla\omega_1(0)=0$.

We further show that $\omega_1$ is $C^1$ on $\D$. We have
\begin{align*}
&\nabla \omega_1(x) \\
&\quad = \frac{ \nabla \omega(x) \cdot V(\phi(T(x),x)) - \omega(x) \cdot \nabla \left[ V(\phi(T(x),x)) \right] }{ V(\phi(T(x),x))^2 },
\end{align*}
where
\begin{align*}
& \nabla \left[ V(\phi(T(x), x)) \right]  = \nabla V(\phi(T(x), x))^\top F(\phi(T(x), x)) \nabla T(x) \\
&\quad + \frac{\partial \phi}{\partial x}(T(x), x)^\top\nabla V(\phi(T(x), x)). 
\end{align*}
It follows that $\omega_1(x)$ is continuous on $\D\setminus\set{0}$. As shown in the proof of Lemma \ref{lem:Tx}, $\norm{\frac{\partial \phi}{\partial x}(T(x), x)}=\mathcal{O}(1/\abs{x})$ and $T(x)=\mathcal{O}(1/\abs{x})$ as $x\ra 0$. Furthermore, since $\phi(T(x),x)\in\partial \C$, $\nabla w(x)\ra 0$, and $\omega(x)=o(\abs{x})$ as $x\ra 0$, we conclude from the calculation of $\nabla \omega_1(x)$ above that $\nabla \omega_1(x)\ra 0$ as $x\ra 0$. This verifies that $\omega_1$ is continuously differentiable on $\D$. 

Define 
$
W(x) = \int_0^\infty \omega_1(\phi(t,x)) dt.  
$
It follows from \cite[Proposition 2]{liu2025physics} that $W$ is continuously differentiable on $\D$ and solves the PDE
\begin{equation}
    \label{eq:lyap_pde}
    \nabla W(x)\cdot F(x) = -\omega_1(x),\quad x\in \D, 
\end{equation}
where $F(x)=f(x)+g(x)u(x)$ and $u$ is the controller constructed in Proposition \ref{prop:controller}. Clearly, this implies that condition (1) of Definition \ref{defn:CLBF} holds. 

Note that $x\mapsto V(\phi(T(x),x))$ remains constant on the trajectory $\phi(t,x)$. We have
\begin{equation}\label{eq:W}
W(x) = \frac{V(x)}{V(\phi(T(x),x))}.
\end{equation}
For $x\in \D\setminus \C$, we have $T(x)>0$ and 
$$
V(\phi(T(x),x)) = V(x) - \int_0^{T(x)}\omega(\phi(t,x))dt<V(x),
$$
which gives $W(x)>1$. For $x\in \operatorname{Int}(\mathcal{C})$, $T(x)<0$ and 
$$
V(\phi(T(x),x)) = V(x) - \int_0^{T(x)}\omega(\phi(t,x))dt > V(x),
$$
which gives $W(x)<1$.

For $x\in \partial \C$, we have $T(x)=0$ and $V(\phi(T(x),x))=V(x)$, which gives $W(x)=1$. 

Combining these gives conditions 2)--3) of Definition~\ref{defn:CLBF}. 

Now we consider the case under Assumption~\ref{as:small-contrl}. We can still define \(W\) using \eqref{eq:W}. By Lemma~\ref{lem:Tx}, the smoothness of \(V\), and the continuous differentiability of \(\phi(t,x)\) with respect to \(x\) away from the origin, \(W\) is continuously differentiable on \(\D\setminus\{0\}\) and continuous on $\D$ (in fact, we can show that it is differentiable on $\D$). We can check that \(W\) satisfies all the conditions in Definition~\ref{defn:CLBF}, except that it may fail to be continuously differentiable at \(x=0\). We then apply a smoothening function \(\rho\) so that \(W_1:=\rho\circ W\) is $C^1$ on $\mathcal D$ and meets all the requirements of Definition~\ref{defn:CLBF} (see \cite[Lemma 17]{teel2000smooth} and also \cite[Theorem 4.3]{lin1996smooth}). In fact, such a $\rho$ can be chosen to be of class $\K_\infty$\footnote{We say that a continuous function $\alpha:[0, \infty)\ra [0,\infty)$ belongs to class $\mathcal{K}_\infty$, if it is strictly increasing,  $\alpha(0)=0$, and $\alpha(s)\ra \infty$ as $s\ra \infty$.} and satisfy $\rho(s) \le s \rho'(s)$ for all $s \ge 0$. Without loss of generality, we may also assume that \(\rho(1) = 1\); otherwise, we can replace $\rho$ with $\rho/\rho(1)$ to satisfy this property. Together with (\ref{eq:lyap_pde}), this implies that $\nabla W_1\cdot F(x)=\rho'(W_1)\nabla W\cdot F=-\rho'(W_1)\omega_1(x)<0$ for all $x\in\mathcal{D}\setminus\set{0}$. Furthermore, because $\rho$ is strictly increasing and $\rho(1)=1$, we have 
$\mathcal C=\set{x\in\mathcal D:\,\rho\circ W(x)\le 1}$ and $\partial \mathcal C=\set{x\in\mathcal D:\,\rho\circ W(x)= 1}$. This shows that $W_1$ satisfies all the conditions of Definition~\ref{defn:CLBF}.
\end{proof}

The following example shows that, without Assumption~\ref{as:stabilizability}, the function $W$ defined by~(\ref{eq:W}) may fail to be $C^1$ at $x = 0$. In such cases, an additional smoothening step is indeed necessary to obtain a $C^1$ CLBF on $\mathcal{D}$.

\begin{example}
    Consider the two-dimensional system in polar coordinates: 
    $
    \dot r = - r^3,
    $
    $
    \dot\theta = \frac{1}{\sqrt{r}},
    $
    where $r>0$; the case $r=0$ corresponds to the origin, which is an equilibrium point.  
    It can be easily checked that the vector field is continuous on $\Real^2$ and smooth on $\Real^2\setminus\set{0}$. This regularity aligns with that guaranteed for the closed-loop system under the controller obtained by Proposition \ref{prop:controller}.
    Consider $h=1-r^2$ and $V=4r^2+r^5\sin\theta$. Then $V$ is a valid Lyapunov function on an open set containing $\,\C=\set{r\le 1}$. Indeed, we have
    $
    \dot V = -8 r^4 - 5 r^7 \sin\theta + r^{9/2} \cos\theta \le -8 r^4 + 5 r^7 + r^{9/2},
    $
    which is negative definite in an open neighborhood of $\mathcal C$. 
    The hitting time $T(x)$ in Lemma \ref{lem:Tx} can be directly computed as 
    $
    T(x) = (1-1/r^2)/2. 
    $
    We can then explicitly solve 
    $$
    W(x)=\frac{V(x)}{V(\phi(T(x),x))} = \frac{4r^2+r^5\sin(\theta)}{4+\sin(\theta + \frac25(1-\frac{1}{r^{\frac52}}))}.
    $$
    It can be checked by definition that $W(x)$ is differentiable on $\Real^2$, continuously differentiable on $\Real^2\setminus\set{0}$, but fails to be continuously differentiable at $x=0$. 
\end{example}

The example below shows that a standard converse Lyapunov theorem may not yield a Lyapunov function whose sublevel set exactly matches $\mathcal{C}$. Such exact representation can offer computational advantages when seeking to maximize the safe stabilization region within a safe domain \cite{liu2025computing}.
\begin{example}
    Consider the one-dimensional system $x'=-x$. The origin is globally asymptotically stable and the domain of attraction is $\mathbb{R}$. A standard converse Lyapunov theorem (e.g., \cite{hafstein2004constructive}) gives a global Lyapunov function 
$
V(x) = \int_0^\infty |\phi(t,x)|^2dt = \frac{x^2}{2}. 
$
Consider a safe set defined by $\mathcal{C} = [-2,1] = \set{x:\,h(x)\ge 0}$ with 
$h(x) = (x+2)(1-x)$. Since there is no control input, strict compatibility of $h$ and $V$ trivially holds because $\mathcal C$ is forward invariant and contained in the domain of attraction. Nonetheless, no sublevel sets of $V$ exactly capture $\mathcal{C}$. By direct computation, one can verify that the construction in the proof of Theorem \ref{thm:clbf} yields $W(x)=x^2$ for $x\ge 0$ and $W(x)=\frac14 x^2$ for $x<0$. Note that $W$ is $C^1$ and satisfy the conditions of Definition \ref{defn:CLBF}. 
\end{example}

\section{Conclusion}

In this work, we established a converse theorem on the existence of CLBFs in the context of safe stabilization for control systems. Specifically, we showed that, under mild assumptions, the existence of a strictly compatible pair of control Lyapunov and control barrier functions is equivalent to the existence of a single continuously differentiable Lyapunov function that simultaneously certifies asymptotic stability and safety. In proving this result, we also established regularity properties and asymptotic estimates on the backward hitting time of a compact set, assuming the set contains an asymptotically stabilizing equilibrium point and its boundary is robustly invariant. It would be of interest in future work to explore how the weakened notion of CLF–CBF compatibility and the converse theorem can be exploited to compute less conservative CLBFs (preliminary results reported in \cite{liu2025computing}). It would also be interesting future work to relax the strict compatibility on $\partial\C$, remove compactness of $\C$, and handle time-varying safety constraints.

\bibliographystyle{plain}        %
\bibliography{acc26} 

\appendices

\section{Proof of Lemma \ref{lem:Tx}}

\begin{proof}
Recall that $\C\subset\D$, $\C$ is compact, and $0\in \operatorname{Int}(\mathcal{C})$. By the fact that $\C$ is robustly invariant, i.e., $\nabla h(x)^{\top} F(x)>0$ for all $x\in \partial C$, solutions can only cross $\partial \C$ once. It follows that for any $x\in \D\setminus\set{0}$, if $T(x)$ exists, it must be unique.

For each $x\in \D\setminus \C$, by the definition of $\C$, we have $h(x)<0$. We also know $h(0)>0$. Let $\eps>0$ be such that $h(x)>0$ for all $x\in B_\eps(0)$. By asymptotic stability of $x=0$, there exists $T'>0$ such that $\abs{\phi(t,x)}<\eps$ for all $t\ge T'$. We have $h(\phi(0,x))<0$ and $h(\phi(T',x))>0$. It follows by the intermediate value theorem that there exists $T(x)\in (0,T')$ such that $h(\phi(T(x),x))=0$. 

Now consider $x\in \C\setminus\set{0}$. Since $\C$ is compact and contained within the domain of attraction, for each $\eps>0$, there exists some $T_{\mathcal C}(\eps)>0$ such that for all $x\in \C$, we have $\abs{\phi(t,x)}<\eps$ when $t\ge T_{\mathcal C}(\eps)$. For each $x\in \C\setminus\set{0}$, pick $\eps\in (0,\abs{x})$. Suppose that $\inf\set{t>0:\,h(\phi(-t,x))=0}=\infty.$  
It follows that $\phi(t,x)\in \C$ for all $t\le 0$. This contradicts $\abs{\phi(t,\phi(-t,x))}=\abs{x}>\eps$ for all $t\ge 0$. Hence, $\inf\set{t>0:\,h(\phi(-t,x))=0}<\infty$. By continuity, we  can let $T(x)=-\inf\set{t>0:\,h(\phi(-t,x))=0}$ and we have that $\phi(T(x),x)\in \partial \C$. 

To show $T(x)$ is continuously differentiable in $x$ on $\D\setminus\set{0}$, we use the implicit function theorem. Consider
$
\psi(t,x): = h(\phi(t,x)). 
$
Clearly, $\psi$ is continuously differentiable in $t$ and $x$. We proved that for each $x\in \D\setminus\set{0}$, there exists $T(x)$ such that $\psi(T(x),x)=0$. The partial derivative of $\psi$ w.r.t. $t$ at $(T(x),x)$ is
$$
\frac{\partial \psi}{\partial t}(T(x),x) = \nabla h(\phi(T(x),x))^{\top} F(\phi(T(x),x)) > 0,
$$
as $\phi(T(x),x))\in \partial \C$. By the implicit function theorem, $T(x)$ is continuously differentiable on $\D\setminus\set{0}$. 

Under Assumption \ref{as:stabilizability}, it follows that the closed-loop system is exponentially stable by Proposition \ref{prop:controller}. To estimate the growth of $\nabla T(x)$ as $x\ra 0$, differentiate the identity 
$
h(\phi(T(x),x))=0.
$
We have
\begin{equation}\label{eq:dT}
\nabla T(x) = -\frac{\frac{\partial \phi}{\partial x}(T(x), x)^{\top}  \nabla h(\phi(T(x), x)) }{ \nabla h(\phi(T(x), x))^{\top} F(\phi(T(x), x)) }.
\end{equation}
Note that $\phi(T(x),x))\in \partial \C$ and hence $\nabla h(\phi(T(x), x))$ and $1/\nabla h(\phi(T(x), x))^{\top} F(\phi(T(x), x))$ are both bounded on $\D\setminus\set{0}$. To bound $\nabla T(x)$, it remains to bound $\frac{\partial \phi}{\partial x}(T(x), x)$. 

To estimate $\frac{\partial \phi}{\partial x}(T(x), x)$ as $x\ra 0$, define $\Phi$ to be the fundamental matrix solution of the initial value problem \cite[p.~82]{perko2001differential} 
$$
\dot{\Phi}(t) = A(t)\Phi(t), \quad \Phi(0) = I,
$$
where
$ 
A(t) = \frac{\partial F}{\partial x}(\phi(t, x)). 
$ 
Then
\begin{equation}\label{eq:Phi}
\frac{\partial \phi}{\partial x}(T(x), x) = \Phi(T(x)).
\end{equation}
Consider any $x\in \operatorname{Int}(\mathcal{C})$. On $[T(x),0]$, $\phi(t,x)\in \C$. It follows that $\norm{A(t)}\le \Lambda$ on $[T(x),0]$ for some $\Lambda>0$ independent of the choice of $x\in \operatorname{Int}(\mathcal{C})$. As a result, by Gronwall's inequality (see, e.g., \cite[p.~79]{perko2001differential}), each column $\Phi_i(t)$ of $\Phi(t)$ satisfies 
\begin{equation}\label{eq:Phi_i}
\norm{\Phi_i(t)}\le e^{\Lambda \abs{t}}    
\end{equation}
on $[T(x),0]$. 

We now estimate $T(x)$ as $x\ra 0$. We use a linearization argument. Let $\hat A= \frac{\partial F}{\partial x}(0)$, $g(x)=F(x)-Ax$, and $P$ be the unique positive definite solution to $PA+A^{\top} P=-I$. Let $V_P(x)=x^{\top} Px$. Pick $c>0$ sufficiently small such that $V_P(x)\le c$ implies $x\in \operatorname{Int}(\mathcal{C})$ and $2x^{\top} Pg(x)\le -\frac12\abs{x}^2$. This is always possible because $g(x)/\abs{x}\ra 0$ as $x\ra 0$. Then we have 
\begin{align*}
\nabla V_P(x)^{\top}  F(x) & = x^{\top} (PA+A^{\top} P)x + 2x^{\top} Pg(x) \\
& = - \abs{x}^{\top}  + 2x^{\top} Pg(x) \le - \frac12 \abs{x}^2   \\
& \le - \frac{1}{2\lambda_{\max}(P)}V_P(x)
\end{align*}
for all $x$ such that $V_P(x)\le c$. It follows that the set $\set{x:\,V_P(x)\le c}\subset \operatorname{Int}(\mathcal{C})$ is robustly positive invariant. Let
\begin{equation}\label{eq:hatT}
\hat{T} := \min_{x : x^\top P x = c} T(x).     
\end{equation}
Now consider any $x$ such $V_P(x)<c$. Similar to the argument we used to show existence of $T(x)$, we can show that there exists $T_P(x)<0$ such that $V_P(\phi(T_P(x),x))=c$. The semigroup property of the flow implies $x=\phi(-T_P(x),\phi(T_P(x),x))$. By the quadratic Lyapunov analysis above, we have 
\begin{align*}
\lambda_{\min}(P)\abs{x}^2 &\le V_P(x) \le V_P(\phi(T_P(x),x)) e^{\frac{T_P(x)}{2\lambda_{\max}(P)}}\\
&= ce^{\frac{T_P(x)}{2\lambda_{\max}(P)}},
\end{align*}
which implies 
$$
T_P(x)\ge 2\lambda_{\max}(P)\log \frac{\lambda_{\min}(P)}{c} + 4\lambda_{\max}(P)\log\abs{x}.
$$
By the semigroup property of the flow again, we have 
$$
T(x) = T_P(x) + T(\phi(T_P(x),x)) \ge T_P(x) + \hat{T},
$$
where $\hat{T}$ is defined in (\ref{eq:hatT}). It follows that $\abs{T(x)}\le c_1 +c_2\log\frac{1}{\abs{x}}$ for some $c_1>0$ and $c_2>0$. Putting this back to (\ref{eq:Phi_i}) and (\ref{eq:Phi}), we obtain $\norm{\frac{\partial \phi}{\partial x}(T(x), x)}\le c_3/\abs{x}$ for some $c_3>0$. Finally, substituting this bound back to (\ref{eq:dT}), we obtain that $\abs{\nabla T(x)}\le c_4/\abs{x}$ for some $c_4>0$ and all $x$ such that $V_P(x)<c$. This shows that $\abs{\nabla T(x)}=\mathcal{O}(1/\abs{x})$ as $x\ra 0$ and completes the proof.
\end{proof}

\end{document}